\newtheorem{theorem}{Theorem}[section]
\newtheorem{lemma}[theorem]{Lemma}
\newtheorem{proposition}[theorem]{Proposition}
\newtheorem{fact}[theorem]{Fact}
\theoremstyle{definition}
\newtheorem{definition}[theorem]{Definition}
\newtheorem{example}[theorem]{Example}
\newtheorem{remark}[theorem]{Remark}
\newtheorem{question}[theorem]{Question}
\newcommand{\C}{\mathbb C}
\newcommand{\M}{\mathbb M}
\newcommand{\N}{\mathbb N}
\newcommand{\Q}{\mathbb Q}
\newcommand{\R}{\mathbb R}
\newcommand{\Z}{\mathbb Z}
\def\tp{\operatorname{tp}}
\def\wt{\operatorname{wt}}
\def\rk{\operatorname{rk}}
\def\dpr{\operatorname{dp}}
\def\eq{\operatorname{eq}}
\def\acl{\operatorname{acl}}
\def\Th{\operatorname{Th}}
\def\Aut{\operatorname{Aut}}
\def\Ind{\setbox0=\hbox{$x$}\kern\wd0\hbox to 0pt{\hss$\mid$\hss}
\lower.9\ht0\hbox to 0pt{\hss$\smile$\hss}\kern\wd0}
\def\Notind{\setbox0=\hbox{$x$}\kern\wd0\hbox to 0pt{\mathchardef
\nn=12854\hss$\nn$\kern1.4\wd0\hss}\hbox to
0pt{\hss$\mid$\hss}\lower.9\ht0 \hbox to 0pt{\hss$\smile$\hss}\kern\wd0}
\def\ind{\mathop{\mathpalette\Ind{}}}
\def\nind{\mathop{\mathpalette\Notind{}}}
\title{Stable groups and expansions of $(\Z,+,0)$}
\date{July 25, 2017}
\author{Gabriel Conant}
\author{Anand Pillay}
\thanks{The second author was supported by NSF grant DMS-136072.}
\address{Department of Mathematics\\
University of Notre Dame\\
Notre Dame, IN, 46656, USA}
\keywords{Stable groups, finite weight, finite dp-rank, Presburger arithmetic}
\subjclass[2010]{Primary: 03C45; Secondary: 03C64}
\begin{document}

\begin{abstract}
We show that if $G$ is a sufficiently saturated stable group of finite weight with no infinite, infinite-index, chains of definable subgroups, then $G$ is superstable of finite $U$-rank. Combined with recent work of Palac\'{i}n and Sklinos, we conclude that $(\Z,+,0)$ has no proper stable expansions of finite weight. A corollary of this result is that if $P\subseteq\Z^n$ is definable in a finite dp-rank expansion of $(\Z,+,0)$, and $(\Z,+,0,P)$ is stable, then $P$ is definable in $(\Z,+,0)$. In particular, this answers a question of Marker on stable expansions of the group of integers by sets definable in Presburger arithmetic. 
\end{abstract}

\maketitle

\section{Introduction and Summary of Main Results}
\setcounter{theorem}{0}
\numberwithin{theorem}{section}

The work in this paper is motivated by questions surrounding first-order expansions of the group $(\Z,+,0)$, which are well-behaved with respect to some notion of model theoretic tameness (e.g. stability or NIP). The group $(\Z,+,0)$ is a well-known example of a stable group, and so this program is a natural analog of the very fruitful study of ``tame" (e.g. o-minimal or NIP) expansions of the real ordered field $(\R,+,\cdot,<,0)$. Expansions of $(\Z,+,0)$ have emerged in the context of definable subgroups of finitely generated free groups, as well as the general growing industry of research on ordered abelian groups satisfying notions of tameness coming from dp-rank in NIP first-order theories (e.g. \cite{DoGo}, \cite{FlGu}, \cite{SiDP}). We will provide more detail on these contexts toward the end of the introduction. For now, we state an explicit question, originally asked by Marker in 2011.

\begin{question}[Marker]\label{ques:Mark}
Is there a set $P\subseteq\Z^n$, definable in Presburger arithmetic $(\Z,+,<,0)$, such that $(\Z,+,0,P)$ is a proper stable expansion of $(\Z,+,0)$?
\end{question}

The focus on Presburger arithmetic in the previous question is not unnatural. Indeed, $(\Z,+,<,0)$ is an ordered structure, and thus unstable, but is still well understood and very well behaved model theoretically (to be precise, its theory is NIP of dp-rank $1$ \cite{DGL}). Our first main result will show that, in fact, these model theoretic notions completely control the answer to Marker's question.

\begin{theorem}\label{thm:mainZ}
If $P\subseteq\Z^n$ is definable in a finite dp-rank expansion of $(\Z,+,0)$, and $(\Z,+,0,P)$ is stable, then $P$ is definable in $(\Z,+,0)$. 
\end{theorem}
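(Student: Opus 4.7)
My strategy is a three-step argument that applies the abstract theorem stated in the abstract to upgrade stability to superstability of finite $U$-rank, and then invokes the recent Palac\'{i}n--Sklinos theorem to recover definability in $(\Z,+,0)$.

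\emph{Step 1 (finite weight).} The structure $(\Z,+,0,P)$ is a reduct of a finite dp-rank expansion of $(\Z,+,0)$, and dp-rank is non-increasing under reducts, so $(\Z,+,0,P)$ itself has finite dp-rank. In stable theories, weight is bounded above by dp-rank (Onshuus--Usvyatsov), so $(\Z,+,0,P)$ is a stable structure of finite weight.

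\emph{Step 2 (chain condition on definable subgroups).} Let $G$ be a sufficiently saturated elementary extension of $(\Z,+,0,P)$. I would show that $G$, viewed as a definable group, admits no infinite descending chain $H_1\supsetneq H_2\supsetneq\cdots$ of definable subgroups with each $[H_i:H_{i+1}]$ infinite. The input is that every subgroup of $\Z$ is of the form $n\Z$, hence of finite index when nontrivial; by elementarity, every $\emptyset$-definable subgroup of $G$ equals $nG$ for some $n\geq 0$. For parameter-definable families, I would invoke Baldwin--Saxl together with stability of the expansion to reduce chains to uniformly definable subfamilies controlled by the pure-group structure, ruling out any infinite, infinite-index chain.

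\emph{Step 3 (apply the main theorem and Palac\'{i}n--Sklinos).} Steps~1 and~2 put us in position to apply the theorem stated in the abstract to $G$, yielding that $(\Z,+,0,P)$ is superstable of finite $U$-rank. The Palac\'{i}n--Sklinos theorem, that any superstable expansion of $(\Z,+,0)$ is a reduct of $(\Z,+,0)$, then forces $P$ to be definable in $(\Z,+,0)$.

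The main obstacle is Step~2: while the chain condition is immediate for $\emptyset$-definable subgroups by the elementary argument above, the extra predicate $P$ could a priori allow new parameter-definable subgroups of $G$, and one must use stability of the expansion carefully to rule out a chain of infinite-index proper inclusions among them.
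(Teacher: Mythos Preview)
Your three-step outline matches the paper's proof exactly; the only substantive gap is in Step~2, which you yourself flag as the obstacle. The paper's resolution is simpler and stronger than what you sketch: rather than analyzing chains, it shows outright that the saturated model $G$ has \emph{no} nontrivial definable subgroup of infinite index. The argument is this. If some formula $\varphi(x,\bar{a})$ defined such a subgroup in $G$, then for each $n$ the first-order sentence ``there exists $\bar{y}$ such that $\varphi(x,\bar{y})$ defines a nontrivial subgroup of index at least $n$'' holds in $G$, hence in $\Z$. This gives a uniformly definable family $(H_n)_{n<\omega}$ of nontrivial subgroups of $\Z$ with $[\Z:H_n]\geq n$. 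Their intersection $H$ has infinite index in $\Z$, so $H=\{0\}$. But Baldwin--Saxl (applied in $\Z$, which is stable since $(\Z,+,0,P)$ is) forces $H$ to equal a finite subintersection, and any finite intersection of nontrivial subgroups of $\Z$ is nontrivial---contradiction. The point is that by transferring to $\Z$ via compactness you never have to worry about what parameter-definable subgroups of $G$ look like; you only use the elementary structure of subgroups of $\Z$ itself.

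Two minor corrections: in Step~1, weight and dp-rank actually coincide in stable theories (not merely $\wt\leq\dpr$), though your weaker statement suffices. In Step~3, your statement of Palac\'{i}n--Sklinos is too strong: they prove there is no proper stable expansion of $(\Z,+,0)$ of \emph{finite $U$-rank}, not merely superstable---indeed $(\Z,+,0,\{2^n:n\in\N\})$ is a proper superstable expansion of $U$-rank $\omega$. Since you already obtain finite $U$-rank from the main theorem, this does not damage your argument, but the citation should be stated correctly.
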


The notion of dp-rank in NIP theories has been an important tool in extending the work of stability theory to the unstable setting (see, e.g., \cite{Sibook}), and so Theorem \ref{thm:mainZ} establishes a fundamental fact about the behavior of NIP expansions of $(\Z,+,0)$. The proof of this theorem will be obtained from a more general result on stable groups (Theorem \ref{thm:main} below), combined with the following result of Palac\'{i}n and Sklinos \cite{PaSk}.

\begin{fact}\cite{PaSk}\label{fact:PaSk}
$(\Z,+,0)$ has no proper stable expansions of finite $U$-rank.
\end{fact}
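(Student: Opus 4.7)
The plan is to suppose, for contradiction, that $T = \Th(\Z,+,0,\ldots)$ is a proper stable expansion of $\Th(\Z,+,0)$ of finite $U$-rank, witnessed by some new definable set $R \subseteq \Z^n$, and to derive a contradiction from the rigidity of the pure additive reduct combined with the structure theory of finite-$U$-rank stable groups. I would work throughout in a sufficiently saturated model $G$ of $T$.

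First, I would assemble what is known about the reduct. The pure group $(\Z,+,0)$ is superstable and one-based (being a module over $\Z$); by the Baur--Monk theorem, every definable subset of $\Z^k$ in the pure language is a Boolean combination of cosets of pp-definable subgroups, and in particular every definable subgroup of $\Z^k$ is given by a finite system of linear equations and congruences. Let $p_0$ denote a generic type of the pure group in $G$; it is stationary on its coset of the type-definable connected component $\bigcap_n nG$.

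Second, using the structure theory of superstable groups of finite $U$-rank (Berline--Lascar, Hrushovski), I would analyze $T$ through its regular types: up to non-orthogonality, the geometry of $T$ is controlled by its regular types, and $p_0$ extends to some regular type $p$ of $G$ in the expansion. The main goal is to show that every regular type of $T$ is non-orthogonal to $p_0$. Combined with one-basedness, which one would need to transfer (at least locally) from the reduct to $T$, this would force every definable set in $T$ --- including the one defined by $R$ --- to be a Boolean combination of cosets of $\acl$-definable subgroups of $G^k$. Since all such subgroups are already pure-group-definable by Baur--Monk, this contradicts the properness of the expansion.

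The main obstacle is precisely ruling out regular types in $T$ orthogonal to $p_0$, together with transferring one-basedness. A hypothetical orthogonal regular type should, via interaction with the group operation, either produce a new infinite descending chain of definable subgroups (contradicting finite $U$-rank in $T$) or a new definable subgroup of some $\Z^k$ not cut out by linear congruences (violating the pure-group rigidity, which any stable expansion should inherit on its pp-type analysis). Making this rigorous will require careful type computations and a controlled use of the finite-$U$-rank and finite-weight assumptions to pin down independence in the expansion; I expect this step to be the heart of the Palac\'{i}n--Sklinos argument, as it is where the specific arithmetic of $\Z$ (rather than only general stable group theory) must enter.
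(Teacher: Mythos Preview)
The paper does not prove this statement; it is stated as Fact~1.3 with a citation to Palac\'{i}n--Sklinos \cite{PaSk} and used as a black box in the proof of Theorem~\ref{thm:mainZ}. So there is no proof in the paper to compare your proposal against.

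That said, your proposal is not a proof but a strategic outline with an explicitly acknowledged gap. You correctly identify the two decisive issues --- transferring one-basedness to the expansion and ruling out regular types orthogonal to the pure-group generic --- but you do not resolve either. In particular, one-basedness is not a property that automatically passes to expansions (there are non-one-based expansions of one-based structures), so ``transferring one-basedness from the reduct'' is exactly the nontrivial content, not a step you can defer. Likewise, your suggested dichotomy for an orthogonal regular type (either produce an infinite descending chain of subgroups or a new definable subgroup of $\Z^k$) is not obviously exhaustive and would require real work to justify. If you want to reconstruct the argument, you should consult \cite{PaSk} directly: the point where the arithmetic of $\Z$ genuinely enters is that every infinite definable subgroup of $\Z$ is of finite index (all are of the form $n\Z$), which is the specific rigidity used to control the situation, rather than a general Baur--Monk analysis of $\Z^k$.
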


We emphasize that, \emph{a priori}, Fact \ref{fact:PaSk} alone is not sufficient to answer Marker's question, or obtain Theorem \ref{thm:mainZ}. In particular, while the dp-rank of a complete theory is \emph{bounded above} by its $U$-rank, there is no further general relationship between these two ranks. Indeed, there are stable groups of dp-rank $1$ and infinite or undefined $U$-rank (see Example \ref{ex:main}). Therefore, the work involved in proving Theorem \ref{thm:mainZ} consists of showing that if a stable expansion of $(\Z,+,0)$ has finite dp-rank, then it must have finite $U$-rank. In fact, we will obtain this conclusion from a general characterization of superstable groups of finite $U$-rank, which exploits the notion of \emph{weight} in stable theories. Before stating this result, we clarify the following terminology (full definitions are given in Section \ref{sec:prelim}).

Let $G$ be a group definable in a complete theory $T$. Unless otherwise stated, we assume $G$ is evaluated in a sufficiently saturated monster model. The \emph{$U$-rank of $G$}, denoted by $U(G)$, is the supremum of the $U$-ranks of types containing a formula defining $G$. Replacing $U$-rank with \emph{weight}, we similarly define the \emph{weight of $G$}, denoted by $\wt(G)$. We say $G$ is \emph{stable} if $T$ is stable. We let $<_\infty$ denote the partial order on groups given by: $H<_\infty K$ if $H\leq K$ and $[K:H]=\infty$. The \emph{length} of a finite chain $K_0<_\infty\ldots<_\infty K_n$ is $n$. If $G$ is superstable of finite $U$-rank then, by well-known facts, $G$ necessarily has finite weight and no infinite $<_\infty$-chains of definable subgroups (see \cite[Theorem 19.9]{Pobook} and \cite[Corollary III.8.2]{BeLa}). Our second main result is that these conditions are also sufficient.

\begin{theorem}\label{thm:main}
If $G$ is stable then the following are equivalent.
\begin{enumerate}[$(i)$]
\item $G$ is superstable of finite $U$-rank.
\item $G$ has finite weight and no infinite $<_\infty$-chains of definable subgroups.
\item $G$ has finite weight and no infinite $<_\infty$-chains of definable normal subgroups.
\end{enumerate}
\end{theorem}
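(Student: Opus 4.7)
The implications (i) $\Rightarrow$ (ii) and (ii) $\Rightarrow$ (iii) are the easy directions: the first is cited in the paper as a consequence of \cite[Theorem 19.9]{Pobook} and \cite[Corollary III.8.2]{BeLa}, and the second is immediate since every normal subgroup is in particular a subgroup. So the content of the theorem lies in (iii) $\Rightarrow$ (i), which I would prove in two stages: first deduce superstability, then show $U(G)$ is finite.

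For superstability, suppose for contradiction that $G$ is not superstable. Then there is a type $p$, containing the formula defining $G$, with an infinite forking chain $p = p_0 \subset p_1 \subset p_2 \subset \cdots$. I would translate this analytic data into group-theoretic data via the stabilizer calculus of stable groups: each $p_i$, after suitable translation to the identity coset of $G^0$, has a type-definable left-stabilizer, and the forking step $p_i \subset p_{i+1}$ corresponds to passing from one stabilizer to a strictly smaller one of infinite index. By Baldwin-Saxl, type-definable stabilizers in a stable group are intersections of uniformly definable subgroups, so each stabilizer admits a definable approximation; taking its normal core (itself a finite intersection of conjugates, again by Baldwin-Saxl) yields a definable \emph{normal} subgroup whose infinite index in $G$ is preserved. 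The finite-weight hypothesis is used here to force the chain of these normal approximations to be genuinely infinite and strictly descending up to infinite index---otherwise independent forking directions would exhaust the bounded weight budget. This produces an infinite $<_\infty$-chain of definable normal subgroups, contradicting (iii).

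With superstability established, the second stage bounds $U(G)$ via the Berline-Lascar structure theory, in which strict $<_\infty$-chains of definable connected subgroups correspond to strict drops in $U$-rank. The hypothesis (iii) then bounds $U(G)$ by some ordinal whose Cantor normal form has bounded length. To reduce this bound to a finite integer, I would invoke finite weight once more, using that any type of $U$-rank $\geq \omega$ admits infinitely many pairwise independent forking extensions---witnessing infinite weight, contrary to hypothesis. Hence $U(G) < \omega$. I expect the principal obstacle to be the first stage, namely the careful conversion of an abstract forking chain into a strict chain of definable normal subgroups of infinite index; the combination of Baldwin-Saxl, normal-core formation, and the weight bookkeeping must be orchestrated so that every step preserves the infinite-index property. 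The second stage is then largely standard once the machinery is set up, though the passage from bounded-length ordinal $U$-rank to finite $U$-rank also leans essentially on the finite-weight hypothesis for the whole group, not merely for individual types.
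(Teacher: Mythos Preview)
Your proposal contains a genuine error and a significant gap, and it diverges entirely from the paper's method.

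The explicit error is in your second stage. You assert that ``any type of $U$-rank $\geq \omega$ admits infinitely many pairwise independent forking extensions---witnessing infinite weight.'' This is false: Example~\ref{ex:main}(3) in the paper is a superstable group of $U$-rank $\omega$ and weight $1$, so a generic type there has $U$-rank $\omega$ but weight $1$. More generally, in a superstable theory the weight of $p$ is bounded by the sum of the integer coefficients in the Cantor normal form of $U(p)$, not by $U(p)$ itself; a type of $U$-rank $\omega^{17}$ still has weight at most $1$. So finite weight alone cannot force $U(G)<\omega$ once superstability is known, and your second stage collapses.

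Your first stage is not wrong in the same concrete way, but the mechanism is unexplained at the crucial point. Converting a forking chain of types into a strict $<_\infty$-chain of stabilizers is not automatic: a forking extension need not shrink the stabilizer by infinite index, and the passage to normal cores can collapse distinctions further. Your appeal to finite weight (``independent forking directions would exhaust the bounded weight budget'') is exactly where the argument needs real content, and none is supplied. Example~\ref{ex:main}(4) is strictly stable of weight $1$, so whatever role weight plays, it is not by crudely bounding the length of forking chains.

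The paper's route is quite different and avoids both problems. It does not separate the argument into ``first superstable, then finite $U$-rank.'' Instead, via Lemma~\ref{lem:main}, it builds normal subgroups from below using Zilber indecomposability applied to (translates and conjugates of) a $U$-rank~$1$ type. The key analytic input is Proposition~\ref{prop:RW1}: inside the algebraic closure of a set of $U$-rank~$\leq 1$ elements, weight and $U$-rank agree, so the resulting type-definable normal subgroup $H$ has $U(H)=\wt(H)\leq\wt(G)$. Iterating in the quotient yields an ascending $<_\infty$-chain of definable normal subgroups, each layer contributing at most $\wt(G)$ to $U(G)$; hypothesis~(iii) forces termination in finitely many steps, giving $U(G)\leq n\cdot\wt(G)$ directly. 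Superstability and finiteness of $U$-rank come out together.
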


Theorem \ref{thm:main} will be obtained as an immediate consequence of the following more detailed statement, which also gives an upper bound on the $U$-rank of $G$.

\begin{theorem}\label{thm:all}
Let $G$ be a stable group of finite weight. If $G$ has no infinite $<_\infty$-chains of definable normal subgroups then:
\begin{enumerate}[$(i)$]
\item there is a uniform finite bound on the length of a $<_\infty$-chain of definable subgroups of $G$, and
\item if $n$ is the maximal length of a $<_\infty$-chain of definable normal subgroups of $G$, then $U(G)\leq n\wt(G)$.
\end{enumerate}
\end{theorem}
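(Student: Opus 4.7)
The plan is to prove $(ii)$ by induction on $n$ and then deduce $(i)$ from $(ii)$ by invoking the standard stable-group fact that for definable subgroups $H <_\infty K$ one has $U(H) < U(K)$: once $(ii)$ gives $U(G) \leq n\wt(G) < \infty$, any $<_\infty$-chain of definable subgroups of $G$ has length at most $U(G) + 1 \leq n\wt(G) + 1$, yielding the uniform bound in $(i)$.

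For the induction in $(ii)$, the base case $n = 0$ forces $G$ to be finite (otherwise $1 <_\infty G$ is a length-$1$ chain of definable normal subgroups), so $U(G) = 0$. For the inductive step, I would fix a maximal chain $1 = N_0 <_\infty N_1 <_\infty \ldots <_\infty N_n$ of definable normal subgroups of $G$ (with $N_n$ of finite index in $G$), and iterate the Lascar inequality $U(K) \leq U(H) \oplus U(K/H)$ along the chain to obtain
\[
U(G) \leq \sum_{i=1}^{n} U(N_i/N_{i-1}).
\]
Since $\wt(N_i/N_{i-1}) \leq \wt(G)$ (weight passes to definable quotients, each coset being interdefinable over a representative), the desired bound $U(G) \leq n\wt(G)$ reduces to the following:

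\emph{Key Lemma.} If $Q$ is a stable group of finite weight with no proper definable normal subgroup of infinite index, then $U(Q) \leq \wt(Q)$.

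I expect the main obstacle to lie both in proving the Key Lemma and in verifying that each quotient $Q_i := N_i/N_{i-1}$ meets its hypothesis. The subtlety in the latter is that a definable normal subgroup of $Q_i$ need not pull back to a $G$-normal subgroup of $G$ sitting between $N_{i-1}$ and $N_i$, so maximality of the chain in $G$'s normal-subgroup lattice does not directly give the desired property of $Q_i$. I would address this either by refining the chain via Baldwin--Saxl (passing to normal cores, which are finite intersections of conjugates in any stable group), or by strengthening the Key Lemma to assume only the absence of $G$-invariant (rather than $Q_i$-normal) definable subgroups of infinite index.

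For the Key Lemma itself, Example \ref{ex:main} warns that finite weight alone does not bound $U$-rank, so the ``almost-simple modulo finite index'' hypothesis must be used essentially. A natural approach is contradiction: if $U(Q) > w := \wt(Q)$, locate $w+1$ independent forking witnesses for the generic of $Q^0$, use a Hrushovski-style stabilizer analysis to produce a proper definable subgroup of $Q$ of infinite index, and then take its normal core (definable by Baldwin--Saxl) to obtain a proper definable normal subgroup of infinite index, contradicting the hypothesis on $Q$.
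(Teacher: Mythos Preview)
Your high-level architecture matches the paper's: decompose $G$ along a $<_\infty$-chain of definable normal subgroups, bound $U(G)$ by the sum of the $U$-ranks of the successive quotients via Lascar's inequality, and then bound each of those by $\wt(G)$. The paper in fact builds the chain dynamically (starting from $K_0=\{1\}$ and applying Lemma~\ref{lem:main}$(b)$ to produce each $K_{m+1}$ from $K_m$), which has the side benefit that every new subgroup is automatically normal in $G$; this sidesteps the issue you flagged about normal-in-$Q_i$ versus normal-in-$G$. Your ``normal core'' fix does not obviously work: the core $C=\bigcap_g g\tilde M g^{-1}$ satisfies $N_{i-1}\le C<_\infty N_i$, but nothing prevents $[C:N_{i-1}]<\infty$, so you may not be able to lengthen the chain. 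Your alternative of strengthening the Key Lemma to a $G$-invariant version is the right move and is essentially what the paper does.

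The genuine gap is in your proof of the Key Lemma. The sentence ``locate $w+1$ independent forking witnesses for the generic of $Q^0$'' is self-defeating: $\wt(Q)=w$ \emph{means} no realization of any type in $Q$ can fork with $w+1$ independent elements, so such witnesses cannot exist. What $U(Q)>w$ gives you is a forking \emph{chain} of length $>w$, and there is no general mechanism to convert a long forking chain into many independent forking witnesses (this is exactly the gap between $U$-rank and weight that Example~\ref{ex:main} illustrates). The subsequent ``stabilizer analysis to produce a proper definable subgroup of infinite index'' is therefore not grounded in anything concrete. The paper's actual argument for the analogue of your Key Lemma (Lemma~\ref{lem:main}$(a)$) is quite different and is the real content of the theorem: take a stationary type $p$ of $U$-rank $1$ in $Q$, translate $p(\M)$ to contain $1$, and apply Zilber indecomposability (Fact~\ref{thm:BeLa}) to the family of all $Q$-conjugates. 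The crucial point is that every finite product of such conjugates lies in the algebraic closure of a set of $U$-rank-$1$ elements, so Proposition~\ref{prop:RW1} gives $U=\wt$ on each product, hence a uniform bound $\wt(Q)$ on their $U$-ranks; indecomposability then yields a connected type-definable normal $H\le Q$ with $U(H)=\wt(H)\le\wt(Q)$. Your sketch does not identify this mechanism, and without it the Key Lemma is unproved.
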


The proof of this theorem involves a new application of Zilber indecomposability in the setting of weight (see Lemma \ref{lem:main}). In Section \ref{sec:prelim}, we will also recall some classical examples showing that the upper bound in this result cannot be improved in general. All three theorems stated above are proved in Section \ref{sec:proofs}.

We end this section with a discussion of related work and open questions. The motivation for Question \ref{ques:Mark} partly arose from interest in the induced structure on proper definable subgroups of finitely-generated free groups, which are examples of stable groups \cite{Sela}. In particular, the maximal proper definable subgroups of such groups are exactly the centralizers of some  nontrivial element, and thus isomorphic \emph{as groups} to $(\Z,+,0)$ (see \cite{PiFFG}). Therefore, studying stable expansions of $(\Z,+,0)$ was seen as an alternate approach toward the unpublished result of Perin that the induced structure on centralizers in the free group is always a pure group. Another proof of this has been recently given by Byron and Sklinos \cite{BySk}.

Beyond this connection to the free group, there has been a recent flurry of interest in expansions of $(\Z,+,0)$. On the stable side, we have the following ambitious question (which is similar to a question of Goodrick quoted in \cite{PaSk}).

\begin{question}
Characterize the sets $P\subseteq\Z^n$ such that $(\Z,+,0,P)$ is stable.
\end{question}

On the unstable side, Dolich and Goodrick \cite{DoGo} have shown that the ordered group $(\Z,+,<,0)$ has no proper \emph{strong} expansions (which includes expansions of finite dp-rank). Concerning \emph{reducts} of Presburger arithmetic, a recent result of the first author \cite{CoPA} is that there are no structures strictly between $(\Z,+,0)$ and $(\Z,+,<,0)$. In a different direction, Kaplan and Shelah \cite{KaSh} show that if $P=\{z\in\Z:|z|\text{ is prime}\}$ then $(\Z,+,0,P)$ is unstable and, assuming a fairly strong conjecture in number theory, $(\Z,+,0,P)$ is supersimple of $SU$-rank $1$ (see also Remark \ref{rem:misc}$(3)$ below). 

The investigation of stable expansions of $(\Z,+,0)$ also fits naturally into the general question of when good properties of a structure are preserved after adding a new predicate. For example Pillay and Steinhorn \cite{PiStD} proved that there are no proper o-minimal expansions of $(\N,<)$, while Marker \cite{MaSucc} exhibited proper strongly minimal expansions of $(\N,x\mapsto x+1)$. Zilber \cite{ZiCU} showed that there are proper $\omega$-stable expansions of the complex field $(\C,+,\cdot,0,1)$ (in particular, adding a predicate for the roots of unity), while Marker \cite{MaComp} proved that there are no proper stable expansions of $(\C,+,\cdot,0,1)$ by a semialgebraic set.

Even more generally, Theorem \ref{thm:main} fits into the investigation of when stronger forms of stability can be proved for stable groups satisfying various assumptions on definable subgroups. For example, in \cite{BalPi}, Baldwin and Pillay prove that if $G$ is superstable of finite $U$-rank, and $G$ has no proper connected type-definable normal abelian subgroups, then $G$ is $\omega$-stable. In \cite{Gag}, Gagelman proves that if $G$ is superstable of finite $U$-rank and satisfies the descending chain condition on definable subgroups, then $G$ is $\omega$-stable. It would be interesting to know if the finiteness conditions on weight and $U$-rank in Theorem \ref{thm:main} can be relaxed to obtain a characterization of superstable groups of a similar flavor. In particular, it is well known that if $G$ is a superstable group, then every type in $G$ has finite weight (i.e. $G$ is \emph{strongly stable}) and $G$ has no infinite descending $<_\infty$-chains of definable subgroups (i.e. $G$ satisfies the \emph{superstable descending chain condition}). Therefore, we ask the following question, which is an analog of Theorem \ref{thm:main} for superstable groups.

\begin{question}
Suppose $G$ is a strongly stable group satisfying the superstable descending chain condition. Is $G$ is superstable?
\end{question}

We end with some important remarks.

\begin{remark}\label{rem:misc}$~$
\begin{enumerate}[$(1)$]
\item Many of the results above on $(\Z,+,0)$ do not hold if one considers expansions of structures elementarily equivalent to $(\Z,+,0)$. For example, there are models $(M,+,0)$ of $\Th(\Z,+,0)$ with proper stable expansions of finite $U$-rank.
\item Theorem \ref{thm:mainZ} also holds with inp-rank in place of dp-rank, since these ranks coincide in the stable case (see \cite{AdStrong}). Therefore the theorem can be applied in the more general class of NTP$_2$ theories.
\item Fact \ref{fact:PaSk} does not hold if stable is replaced by simple. For example, by work of Chatzidakis and Pillay \cite{ChzPi}, there are ``generic" subsets  $P\subseteq\Z$ such that $(\Z,+,0,P)$ is unstable, but supersimple of $SU$-rank $1$.
\end{enumerate}
\end{remark}

\section{Preliminaries}\label{sec:prelim}

The purpose of this section is to collect the preliminary tools and facts that we will need in the proofs of our main results. Our intent is to include sufficient detail so as to make this paper accessible to a wider audience beyond those researchers well-versed in stability theory. For example, Lemma \ref{lem:RW} and Proposition \ref{prop:indec} are folkloric facts, which seem to be used primarily in the superstable context, and to not appear in the literature in more general settings. Therefore we have included proofs  suitable for the general stable case.

Throughout this section, $T$ is a stable first-order theory, and we assume $T=T^{\eq}$. We work in a sufficiently saturated monster model $\M$ of $T$, and use letters $A,B,\ldots$ for small parameter sets in $\M$, where a parameter set $A$ is \emph{small} (written $A\subset\M$) if $\M$ is $|T(A)|^+$-saturated. In general, a cardinal $\kappa$ is \emph{small} or \emph{bounded} if $\M$ is $\kappa^+$-saturated. We use letters $X,Y,\ldots$ for definable or type-definable sets, and we always identify such a set $X$ with its set of realizations $X(\M)$ in the monster model.  As usual, by a \emph{type-definable} set we mean an intersection of a small collection of definable sets.  Given a type $p$, and a type-definable set $X$, we write $p\models X$ if $p$ extends a type defining $X$.  We use $\ind$ for the nonforking independence relation in $T$. We assume familiarity with stability and $U$-rank. 

\begin{definition}\label{def:weight}
$~$
\begin{enumerate}
\item  Given a sequence $(\bar{b}_i)_{i\in I}$ of tuples and $C\subset\M$, we say $(\bar{b}_i)_{i\in I}$ is \textbf{$C$-independent} if $\bar{b}_i\ind_C \{\bar{b}_j:j\neq i\}$ for all $i\in I$.
\item Given $C\subset\M$ and $p\in S(C)$, define the \textbf{weight of $p$}, denoted $\wt(p)$, to be the supremum over cardinals $\kappa$ for which there is some $B\supseteq C$, a realization $\bar{a}\models p$, and a $B$-independent sequence $(\bar{b}_i)_{i<\kappa}$ such that $\bar{a}\ind_C B$ and $\bar{a}\nind_B \bar{b}_i$ for all $i<\kappa$.
\item Let $\rk$ denote either $U$-rank or weight.
\begin{enumerate}[$(i)$]
\item If $\bar{a}\in\M$ and $C\subset\M$ then $\rk(\bar{a}/C)$ denotes $\rk(\tp(\bar{a}/C))$.
\item If $X$ is type-definable, then $\rk(X)=\sup\{\rk(p):p\models X\}$. 
\end{enumerate}
\end{enumerate}
\end{definition}

The final notion of rank discussed in the introduction is \emph{dp-rank}, which we calculate for type-definable sets in the same way. In particular, if $X$ is type-definable then $\dpr(X)=\sup\{\dpr(p):p\models X\}$ where we set $\dpr(p)$ to be the supremum over cardinals $\kappa$ such that the relation ``$\dpr(p)\geq\kappa$" holds, as defined in \cite[Chapter 4]{Sibook}. We are justified in avoiding the full definition of dp-rank because of the following fact about stable theories.

\begin{fact}[\textnormal{\cite{AdStrong}, \cite{OnUs}, \cite{Sibook}}]\label{fact:dpwt}
If $X\subseteq\M$ is type-definable then $\wt(X)=\dpr(X)$.
\end{fact}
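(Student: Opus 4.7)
The plan is to prove the sharper pointwise statement $\wt(p)=\dpr(p)$ for every complete type $p$ over a small parameter set, from which the result follows by taking supremum over completions of the partial type defining $X$. Throughout I would rely on two standard consequences of stability: (a) any infinite $C$-indiscernible sequence is a Morley sequence over $C$; and (b) for a Morley sequence $I$ over $C$ and any tuple $a$, the sequence $I$ is indiscernible over $Ca$ if and only if $a\ind_C I$. With these two translation devices in hand, each inequality becomes a direct conversion between the definitions of weight and dp-rank.

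For $\wt(p)\leq\dpr(p)$, I would start from a weight witness: $B\supseteq C$, $a\models p$ with $a\ind_C B$, and a $B$-independent sequence $(b_i)_{i<\kappa}$ with $a\nind_B b_i$ for every $i$. I would extend each $b_i$ to an infinite Morley sequence $I_i$ in $\tp(b_i/B)$, choosing the $I_i$ inductively so as to form a mutually $B$-independent family. By (a), a mutually independent family of Morley sequences is mutually $B$-indiscernible, and hence mutually $C$-indiscernible. Since $a\nind_B b_i$, the full sequence also satisfies $a\nind_B I_i$, so by (b) each $I_i$ fails to be indiscernible over $Ba$, and \emph{a fortiori} over $Ca$. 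This is a dp-rank witness of size $\kappa$.

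For $\dpr(p)\leq\wt(p)$, I would begin with mutually $C$-indiscernible infinite sequences $(I_t)_{t<\kappa}$ and $a\models p$ with each $I_t$ not indiscernible over $Ca$. Fact (a) makes each $I_t$ a Morley sequence over $C\cup\bigcup_{s\neq t}I_s$, and taking first terms $b_t$ produces a $C$-independent family $(b_t)_{t<\kappa}$. Fact (b) converts the non-indiscernibility into $a\nind_C I_t$ for each $t$. The delicate step is to replace the sequence-level dependence $a\nind_C I_t$ by an element-level dependence $a\nind_B b'_t$ for a single uniform base $B\supseteq C$, while preserving $B$-independence of the family. I would do this by walking along each $I_t$ to the first index where forking with $a$ appears and absorbing the earlier initial segment into $B$, then verifying via transitivity of nonforking that the resulting augmented base still leaves the family $(b'_t)_t$ independent.

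The main obstacle is precisely this last extraction: converting forking with an indiscernible sequence into forking with one of its elements, uniformly across $\kappa$ many sequences and without destroying the independence of the family. This is the bookkeeping that lies behind the Adler and Onshuus--Usvyatsov equivalences cited in the statement, and it is the reason the proof is not a one-line reduction: one must manage base extensions carefully, and use stability to ensure that the Morley-sequence structure survives each such extension.
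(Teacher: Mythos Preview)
The paper does not give a proof of this fact; it is quoted from \cite{AdStrong}, \cite{OnUs}, \cite{Sibook} and used as a black box. So there is nothing in the paper to compare your sketch against, and the only question is whether your argument is sound.

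Your outline follows the standard route, but the direction $\wt(p)\le\dpr(p)$ contains a genuine error. You conclude that each $I_i$ fails to be indiscernible over $Ba$, ``and \emph{a fortiori} over $Ca$.'' The implication runs the wrong way: since $Ca\subseteq Ba$, indiscernibility over $Ba$ implies indiscernibility over $Ca$, so \emph{failure} over the larger base $Ba$ says nothing about failure over the smaller base $Ca$. What you have actually built is a dp-rank witness for $\tp(a/B)$, not for $p=\tp(a/C)$. Nor can this be patched by the forking facts you have assembled: from $a\ind_C B$, $I_i\ind_C B$ (which does follow from your facts (a) and (b)), and $a\nind_B I_i$, one cannot conclude $a\nind_C I_i$ in general. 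A three-element configuration $a=b+c$ in a vector space, with $B=\{b\}$ and $I_i$ a Morley sequence over $b$ starting at $c$ and chosen independent from $b$, already shows this.

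The standard repair is to invoke the theorem (proved in \cite{Sibook} for NIP theories) that dp-rank is invariant under passing to extensions of the type, so that $\dpr(p)=\dpr(\tp(a/B))\ge\kappa$. This is not a triviality and is precisely one of the ingredients packaged into the references the paper cites; your sketch needs it explicitly.

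A smaller point in the other direction: the $C$-independence of the first terms $(b_t)$ is obtained from your fact~(b), applied with $\bigcup_{s\ne t}I_s$ in the role of the tuple $a$, not from fact~(a) alone. Knowing that $I_t$ is Morley over $C\cup\bigcup_{s\ne t}I_s$ says nothing by itself about independence of $I_t$ from that set over $C$; it is fact~(b) that converts mutual indiscernibility into mutual independence.
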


 We will use the following basic properties of $U$-rank and weight.

\begin{fact}\label{fact:RW}
Let $\rk$ denote either $U$-rank or weight.
\begin{enumerate}[$(a)$]
\item Given $\bar{a}\in\M$ and $C\subset\M$, $\rk(\bar{a}/C)=0$ if and only if $\bar{a}\in\acl(C)$.

\item Fix $\bar{a},\bar{b}\in\M$ and $C\subset\M$. If $\bar{a}\in\acl(\bar{b},C)$ then $\rk(\bar{a}/C)\leq\rk(\bar{b}/C)$.

\item Suppose $X$ is type-definable and $f$ is a definable function  with domain containing $X$. Then $\rk(f(X))\leq \rk(X)$. 
\end{enumerate}
\end{fact}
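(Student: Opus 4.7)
The plan is to address (a), (b), and (c) in turn, handling both $U$-rank and weight in each case and using (b) as the main lemma for (c).

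For (a), the $U$-rank version is immediate from the recursive definition of Lascar rank: $U(p)=0$ precisely when $p$ has no forking extension, i.e.\ when $p$ is algebraic. For weight, I would use the basic characterization $\bar{a}\ind_C\bar{a}$ if and only if $\bar{a}\in\acl(C)$, valid in any stable theory. If $\bar{a}\in\acl(C)$, then $\bar{a}$ is independent from every set over $C$, so no weight witness can exist; conversely, if $\bar{a}\notin\acl(C)$, then taking $B=C$ and $\bar{b}_0=\bar{a}$ gives a weight-$1$ witness.

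For (b), the $U$-rank case follows from the standard Lascar inequality $U(\bar{a}/C)\leq U(\bar{a}\bar{b}/C)=U(\bar{b}/C)+U(\bar{a}/\bar{b}C)$, where the last term is $0$ by hypothesis. For weight, given $\kappa<\wt(\bar{a}/C)$ witnessed by $B\supseteq C$, a conjugate $\bar{a}'\models\tp(\bar{a}/C)$, and a $B$-independent sequence $(\bar{d}_i)_{i<\kappa}$ with $\bar{a}'\ind_C B$ and $\bar{a}'\nind_B\bar{d}_i$, I would first pick $\bar{b}_0$ with $\bar{a}'\bar{b}_0\equiv_C\bar{a}\bar{b}$, then extend $\tp(\bar{b}_0/C\bar{a}')$ to a complete type over $C\bar{a}'B(\bar{d}_i)_{i<\kappa}$ that does not fork over $C\bar{a}'$, and let $\bar{b}'$ realize it. Then $\bar{a}'\in\acl(\bar{b}'C)$; transitivity of $\ind$ together with $\bar{b}'\ind_{C\bar{a}'}B$ and $\bar{a}'\ind_C B$ yields $\bar{b}'\ind_C B$; and if $\bar{b}'\ind_B\bar{d}_i$ were to hold, then using $\bar{a}'\in\acl(\bar{b}'B)$ and invariance of non-forking under $\acl$-equivalent bases, we would get $\bar{a}'\ind_B\bar{d}_i$, contradicting the witness. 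Hence $(B,\bar{b}',(\bar{d}_i))$ witnesses $\wt(\bar{b}/C)\geq\kappa$.

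For (c), fix a small set $D$ containing both the parameters defining $X$ and those defining $f$, so that both $X$ and $f(X)$ are type-definable over $D$. For any $\bar{c}\models f(X)$, write $\bar{c}=f(\bar{a})$ with $\bar{a}\models X$, giving $\bar{c}\in\acl(\bar{a}D)$. Applying (b) with base $D$ yields $\rk(\bar{c}/D)\leq\rk(\bar{a}/D)\leq\rk(X)$, where the last inequality is immediate from the definition of $\rk(X)$ since $\tp(\bar{a}/D)\models X$. Taking suprema over $\bar{c}\models f(X)$ gives $\rk(f(X))\leq\rk(X)$. The main obstacle is the weight case of (b): engineering $\bar{b}'$ so that it simultaneously realizes the correct type over $C$, stays independent from $B$ over $C$, and inherits the forking relations $\bar{a}'\nind_B\bar{d}_i$. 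The underlying principle, invariance of non-forking under passing between $\acl$-equivalent bases, is standard but must be deployed with some care.
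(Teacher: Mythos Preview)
Your argument is correct and aligns with the paper's approach: the paper treats this as a routine exercise, citing Lascar's inequality for $U$-rank and the analogous weight inequality from Shelah \cite[Lemma V.3.11(2)]{Shbook} as black boxes, whereas you reprove directly the special case of the weight inequality needed for (b). One small imprecision in your handling of (c): by bounding only $\rk(\bar{c}/D)$ over the fixed base $D$ you are implicitly using $\rk(f(X))=\sup_{\bar{c}\in f(X)}\rk(\bar{c}/D)$, which is immediate for $U$-rank (since $U$-rank can only drop under extension) but is not obvious for weight; the fix is trivially to run the same argument with an arbitrary base $A\supseteq D$ in place of $D$, noting that $\bar{c}\in\acl(\bar{a}A)$ and $\tp(\bar{a}/A)\models X$ still hold.
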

\begin{proof}
These are straightforward exercises. Parts $(b)$ and $(c)$ follow easily from part $(a)$ together with Lascar's inequality for $U$-rank (see \cite[Theorem 19.4]{Pobook}), and a sufficiently similar inequality for weight (see \cite[Lemma V.3.11(2)]{Shbook}). 
\end{proof}

In a superstable theory, the weight of a type $p$ is bounded by the sum of the integer coefficients in the Cantor normal form of $U(p)$ (see \cite[Theorem 19.9]{Pobook}). In particular, one has $\wt(p)\leq U(p)$, which still holds for stable theories in general. 

\begin{lemma}\label{lem:RW}
If $C\subset\M$ and $p\in S(C)$, then $\wt(p)\leq U(p)$.
\end{lemma}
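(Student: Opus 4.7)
The plan is to show that any witness to $\wt(p) \ge n$ (for finite $n$) produces a chain of $n$ successive forking extensions of $\tp(\bar{a}/B)$, each strictly dropping $U$-rank. Since $U$-rank is ordinal-valued whenever defined, and ordinals are well-founded, this will force $U(p) \ge n$, from which the inequality $\wt(p) \le U(p)$ follows in all cases.

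To set up, I would fix $B \supseteq C$, a realization $\bar{a} \models p$ with $\bar{a} \ind_C B$, and a $B$-independent sequence $\bar{b}_0, \ldots, \bar{b}_{n-1}$ satisfying $\bar{a} \nind_B \bar{b}_i$ for every $i < n$. Since nonforking extensions preserve $U$-rank, we have $U(\bar{a}/B) = U(\bar{a}/C) = U(p)$. The task then reduces to showing that each $\bar{b}_i$ forks with $\bar{a}$ not merely over $B$ but over the enlarged set $B\bar{b}_0 \ldots \bar{b}_{i-1}$.

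The key claim is therefore: $\bar{a} \nind_{B\bar{b}_0 \ldots \bar{b}_{i-1}} \bar{b}_i$ for every $i < n$. I would prove this by contradiction and symmetry. Suppose $\bar{b}_i \ind_{B\bar{b}_0 \ldots \bar{b}_{i-1}} \bar{a}$. Combining this with $\bar{b}_i \ind_B \bar{b}_0 \ldots \bar{b}_{i-1}$, which is a consequence of the $B$-independence of the sequence, transitivity of nonforking yields $\bar{b}_i \ind_B \bar{a}\bar{b}_0 \ldots \bar{b}_{i-1}$, hence $\bar{b}_i \ind_B \bar{a}$ by monotonicity. This contradicts the choice of $\bar{b}_i$, proving the claim. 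Once the claim is established, each $\bar{b}_i$ realizes a forking extension of $\tp(\bar{a}/B\bar{b}_0 \ldots \bar{b}_{i-1})$, producing a strictly descending chain of ordinals
\[
U(\bar{a}/B) > U(\bar{a}/B\bar{b}_0) > \cdots > U(\bar{a}/B\bar{b}_0 \ldots \bar{b}_{n-1}),
\]
which forces $U(p) = U(\bar{a}/B) \ge n$.

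The main obstacle I anticipate is arranging the forking calculus in the key claim correctly: in particular, recognizing that symmetry should be used to shift the relevant forking relation onto $\bar{b}_i$, so that the $B$-independence of the sequence and transitivity of nonforking combine to yield a contradiction with the hypothesis $\bar{a} \nind_B \bar{b}_i$. Once that step is isolated, the rest reduces to standard facts about $U$-rank under forking extensions, and the inequality passes to the supremum over $n$.
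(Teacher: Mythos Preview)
Your core forking argument is correct and is essentially the same as the paper's: both use $B$-independence together with symmetry and transitivity to show that $\bar{a}$ forks with each $\bar{b}_i$ over an enlarged base containing the remaining $\bar{b}_j$'s, and then read off a strict drop in $U$-rank at every step.

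There is, however, a gap in your treatment of the general case. The definition of $\wt(p)$ allows the witnessing sequence $(\bar{b}_i)_{i<\kappa}$ to be indexed by an arbitrary cardinal $\kappa$, and the lemma as stated asserts $U(p)\geq\kappa$ for every such $\kappa$. Your argument builds a strictly \emph{descending} chain
\[
U(\bar{a}/B) > U(\bar{a}/B\bar{b}_0) > \cdots > U(\bar{a}/B\bar{b}_0\ldots\bar{b}_{n-1}),
\]
which, by well-foundedness of the ordinals, can only run for finitely many steps; so you obtain $U(p)\geq n$ for each finite $n$, and ``passing to the supremum over $n$'' yields only $U(p)\geq\min(\wt(p),\omega)$, not $U(p)\geq\wt(p)$. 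The paper avoids this by reversing the bookkeeping: it sets $B_i = B\cup\{\bar{b}_j : j\geq i\}$ and proves $U(\bar{a}/B_i)\geq i$ by transfinite induction on $i\leq\kappa$. The successor step is exactly your forking computation (with the tail $\{\bar{b}_j : j>i\}$ in place of your initial segment), and the limit step is immediate since $B_\lambda\subseteq B_i$ for $i<\lambda$ gives $U(\bar{a}/B_\lambda)\geq U(\bar{a}/B_i)\geq i$. At $i=\kappa$ one has $B_\kappa=B$, whence $U(\bar{a}/B)\geq\kappa$. For the finite-weight applications in the paper your version already suffices, but the tail indexing is what makes the full statement go through.
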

\begin{proof}
Fix $p\in S(C)$.  Suppose we have a set $B\supseteq C$, a realization $\bar{a}\models p$, and a $B$-independent sequence $(\bar{b}_i)_{i<\kappa}$, for some cardinal $\kappa$, such that $\bar{a}\ind_C B$ and $\bar{a}\nind_B \bar{b}_i$ for all $i<\kappa$. We prove $U(\bar{a}/B)\geq\kappa$, which implies $U(p)\geq\kappa$. Given $i\leq \kappa$, define $B_i=B\cup \{\bar{b}_j:i\leq j\}$ (so $B_\kappa=B$). We prove, by induction on $i\leq\kappa$, that $U(\bar{a}/B_i)\geq i$. Given this, we will then have $U(\bar{a}/B)=U(\bar{a}/B_{\kappa})\geq\kappa$. 

The base case is trivial; so suppose $\lambda\leq\kappa$ is a limit ordinal and $U(\bar{a}/B_i)\geq i$ for all $i<\lambda$. For any $i<\lambda$, we have $B_\lambda\subseteq B_i$, and so $U(\bar{a}/B_\lambda)\geq U(\bar{a}/B_i)\geq i$. Therefore $U(\bar{a}/B_\lambda)\geq\lambda$. Finally, fix $i<\kappa$ and suppose $U(\bar{a}/B_i)\geq i$. Since $B_{i+1}\ind_B\bar{b}_i$ and $\bar{a}\nind_B \bar{b}_i$, we have $\bar{a}\nind_{B_{i+1}}\bar{b}_i$ by transitivity. Therefore $U(\bar{a}/B_{i+1})\geq i+1$.
\end{proof}

For general stable theories, Lemma \ref{lem:RW} is the most one can say concerning the relationship between weight and $U$-rank for arbitrary types (see Example \ref{ex:main}). However, when working ``close" to types of $U$-rank $1$, weight and $U$-rank coincide. This will be a key tool in the proof of our main result.

\begin{proposition}\label{prop:RW1}
Fix $C\subset\M$, and suppose $X\subseteq\M$ is such that $U(a/C)\leq 1$ for all $a\in X$. If $\bar{b}$ is a finite tuple in $\acl(XC)$ then $U(\bar{b}/C)=\wt(\bar{b}/C)$.
\end{proposition}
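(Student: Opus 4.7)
The plan is to prove $\wt(\bar{b}/C) \geq U(\bar{b}/C)$, since the reverse direction is Lemma \ref{lem:RW}. Setting $n = U(\bar{b}/C)$, I aim to construct a $C$-independent sequence $(a_1, \ldots, a_n)$ in $X$ with $a_i \in \acl(C\bar{b}) \setminus \acl(C)$ for each $i$. Such a sequence witnesses $\wt(\bar{b}/C) \geq n$ in the sense of Definition \ref{def:weight}: taking $B = C$, the sequence is $B$-independent by construction, $\bar{b} \ind_C B$ holds trivially, and for each $i$ we have $\bar{b} \nind_B a_i$ because $a_i \in X$ has $U(a_i/C) = 1$ and lies in $\acl(C\bar{b})$, which forces $a_i \nind_C \bar{b}$ (so $\bar{b} \nind_C a_i$ by symmetry of forking).

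The construction proceeds by induction on $n$, the case $n=0$ being trivial. The inductive step depends on the following key claim: \emph{if $\bar{b}' \in \acl(XC)$ and $\bar{b}' \notin \acl(D)$ for some $D \supseteq C$, then there exists $a \in X$ with $a \in \acl(D\bar{b}') \setminus \acl(D)$.} Granting this claim (applied with $D = C$), I extract $a_1 \in X \cap \acl(C\bar{b}) \setminus \acl(C)$, so that $U(a_1/C) = 1$. A Lascar-inequality calculation using $a_1 \in \acl(C\bar{b})$ then yields $U(\bar{b}/Ca_1) = n-1$, and the inductive hypothesis applied over the new base $Ca_1$ produces $a_2, \ldots, a_n \in X \cap \acl(Ca_1\bar{b})$ that are $Ca_1$-independent, each of $U$-rank $1$ over $Ca_1$. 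A short computation exploiting that each $a_i$ has $U$-rank $1$ upgrades this to $C$-independence of the full tuple $(a_1, \ldots, a_n)$.

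The main obstacle is the key claim. The driving ingredient is an ``external exchange'' property: for $a \in X$, $S \supseteq C$, and any tuple $\bar{b}'$ with $\bar{b}' \in \acl(aS) \setminus \acl(S)$, we have $a \in \acl(\bar{b}'S)$; this is immediate from $U(a/S) \leq 1$ and symmetry of forking. To apply it, I take a tuple $\bar{a}' = (a_1', \ldots, a_m') \in X^m$ of globally minimum length with $\bar{b}' \in \acl(\bar{a}'D)$; by minimality each $a_j' \notin \acl(D)$ and $\bar{a}'$ is $D$-independent. The case $m = 1$ follows at once from external exchange. The case $m \geq 2$ is the hardest piece: one must rule out the configuration in which every $a_j'$ satisfies $a_j' \notin \acl(D\bar{b}')$, equivalently $a_j' \ind_D \bar{b}'$. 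The idea is that such a configuration contradicts minimality of $m$: combining external exchange applied to each coordinate (giving $a_j' \in \acl(\bar{a}'_{-j} \bar{b}' D)$) with the interalgebraicity content of $\bar{b}' \in \acl(\bar{a}'D)$ and the pregeometry structure on $X$ over $D$ coming from the $U$-rank $1$ hypothesis, one should find a strictly shorter tuple from $X$ still capturing $\bar{b}'$ algebraically over $D$. I expect this last reduction step to be the most delicate part of the argument.
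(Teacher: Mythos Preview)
Your ``key claim'' is false, and this is not a matter of a delicate argument being hard to complete---the configuration you hope to rule out can genuinely occur. Take $T$ to be an infinite vector space over a finite field (or $\mathrm{ACF}_0$), $C=D=\emptyset$, $a_1,a_2$ linearly (resp.\ algebraically) independent, $X=\{a_1,a_2\}$, and $\bar b'=a_1+a_2$. Then $\bar b'\in\acl(XC)\setminus\acl(D)$ and $U(\bar b'/D)=1$, yet neither element of $X$ lies in $\acl(D\bar b')$. The minimal tuple is $\bar a'=(a_1,a_2)$ with $m=2$, every $a_j'$ satisfies $a_j'\notin\acl(D\bar b')$, and there is simply no shorter tuple in $X$ capturing $\bar b'$. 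So the first step of your induction (extracting $a_1\in X\cap\acl(C\bar b)\setminus\acl(C)$) already fails, and hence your plan to witness $\wt(\bar b/C)\geq n$ with $B=C$ and witnesses drawn from $X$ cannot work in general.

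The paper's proof circumvents exactly this obstruction by \emph{extending the base} rather than insisting on $B=C$. One fixes an algebraically independent tuple $\bar a=(a_1,\ldots,a_n)\in X^n$ with $\bar b\in\acl(\bar a C)$, takes $\bar a_1$ to be a maximal subtuple with $\bar b\ind_C\bar a_1$, and lets $\bar a_2$ be the rest. The maximality of $\bar a_1$ forces each coordinate of $\bar a_2$ into $\acl(\bar b,\bar a_1,C)$ (this is your ``external exchange'', but applied over the extended base $C\bar a_1$), so $\bar b$ and $\bar a_2$ are interalgebraic over $C\bar a_1$. Since $\bar b\ind_C\bar a_1$, both $U$-rank and weight of $\bar b$ are unchanged in passing from $C$ to $C\bar a_1$, and over $C\bar a_1$ they both equal $|\bar a_2|$. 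In your counterexample this amounts to taking $\bar a_1=(a_1)$ and observing $a_2\in\acl(\bar b,a_1)$; the point is that you must allow yourself the freedom (built into Definition~\ref{def:weight}) to pass to $B=C\bar a_1\supsetneq C$.
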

\begin{proof}
First, we observe that by Lascar's inequality and Fact \ref{fact:RW}$(b)$, $U(\bar{b}/C)$ exists (and is in fact finite) for any finite tuple $\bar{b}$ from $\acl(XC)$. In particular, for any such $\bar{b}$ and any $C\subseteq B\subseteq A$, we have $\bar{b}\ind_B A$ if and only if $U(\bar{b}/A)=U(\bar{b}/B)$. 

 Fix $\bar{b}\in\acl(XC)$. By Fact \ref{fact:RW}$(a)$, we may assume that some coordinate of $\bar{b}$ is not in $\acl(C)$. Fix $\bar{a}=(a_1,\ldots,a_n)\in X$, algebraically independent over $C$, with $\bar{b}\in\acl(\bar{a},C)$. Let $k\leq n$ be maximal such that, for some $i_1<\ldots<i_k\leq n$, we have $\bar{b}\ind_C a_{i_1}\ldots a_{i_k}$ (since $\bar{b}\in \acl(\bar{a},C)\backslash\acl(C)$ we must have $k<n$, and it is possible that $k=0$). Without loss of generality, assume $\bar{b}\ind_C a_1 \ldots a_k$. Let $\bar{a}_1=(a_1,\ldots,a_k)$ and $\bar{a}_2=(a_{k+1},\ldots,a_n)$. Since $\bar{b}\ind_C \bar{a}_1$, we have $U(\bar{b}/C,\bar{a}_1)=U(\bar{b}/C)$ and $\wt(\bar{b}/C,\bar{a}_1)=\wt(\bar{b}/C)$ (see, e.g.,  \cite[Lemma V.3.11]{Shbook}). So to prove the result, it suffices to show $U(\bar{b}/C,\bar{a}_1)=\wt(\bar{b}/C,\bar{a}_1)$.

Since $\bar{a}_2$ is algebraically independent over $C\bar{a}_1$, we have $U(a_i/C,\bar{a}_1)=1=\wt(a_i/C,\bar{a}_1)$ for all $k< i\leq n$ by Fact \ref{fact:RW}$(a)$ and Lemma \ref{lem:RW}. It follows from Lascar's inequality and \cite[Lemma V.3.11(1)]{Shbook} that $U(\bar{a}_2/C,\bar{a}_1)=|\bar{a}_2|=\wt(\bar{a}_2/C,\bar{a}_1)$. So to prove $U(\bar{b}/C,\bar{a}_1)=\wt(\bar{b}/C,\bar{a}_1)$, it suffices to show $U(\bar{b}/C,\bar{a}_1)=U(\bar{a}_2/C,\bar{a}_1)$ and $\wt(\bar{b}/C,\bar{a}_1)=\wt(\bar{a}_2/C,\bar{a}_1)$. Since $\bar{b}\in\acl(\bar{a}_2,\bar{a}_1,C)$, it suffices by Fact \ref{fact:RW}$(b)$ to show $\bar{a}_2\in\acl(\bar{b},\bar{a}_1,C)$. 

For a contradiction, suppose there is $k<i\leq n$ such that $a_i\not\in\acl(\bar{b},\bar{a}_1,C)$. Then  $U(a_i/C,\bar{a}_1,\bar{b})=1=U(a_i/C,\bar{a}_1)$ and so $a_i\ind_{C\bar{a}_1}\bar{b}$. Since $\bar{b}\ind_C \bar{a}_1$, we have $\bar{b}\ind_C\bar{a}_1a_i$ by symmetry and transitivity. This contradicts the maximality of $k$.
\end{proof}

\begin{remark}
The notion of weight also behaves nicely in simple theories. For example, after replacing all occurrences of $U$-rank with $SU$-rank, the statements of Fact \ref{fact:RW}, Lemma \ref{lem:RW}, and Proposition \ref{prop:RW1} hold when $T$ is simple (with identical proofs).
\end{remark}

We now turn to stable groups. Recall the following classical results.

\begin{fact}\label{fact:SG}
Let $G$ be a group definable in a model of a stable theory.
\begin{enumerate}[$(a)$]
\item \textnormal{(Baldwin-Saxl, see \cite[Proposition 1.4]{PoStG})} Let $\{H_i:i\in I\}$ be a family of uniformly definable subgroups of $G$, and set $H=\bigcap_{i\in I}H_i$. Then $H=\bigcap_{i\in I_0}H_i$ for some finite $I_0\subseteq I$. In particular, $H$ is definable.
\item \textnormal{(Poizat, see \cite[Theorem 5.17]{PoStG})} Any type-definable subgroup of $G$ is the intersection of at most $|T|$ many definable subgroups of $G$.
\end{enumerate}
\end{fact}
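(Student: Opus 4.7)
The plan is to prove (a) first and then use it crucially in (b).

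\textbf{Part (a).} By uniform definability, each $H_i = \phi(G, \bar b_i)$ for a single formula $\phi(x, \bar y)$. Supposing no finite subintersection equals $H$, I would recursively build indices $i_1, i_2, \ldots$ and elements $g_n$ satisfying $g_n \in \bigcap_{m \leq n} H_{i_m} \setminus H_{i_{n+1}}$; thus $\phi(g_n, \bar b_{i_m})$ holds for $m \leq n$ and fails at $m = n+1$. After passing to a $\phi$-indiscernible subsequence (via Ramsey in a saturated model), the truth value of $\phi(g_n, \bar b_{i_m})$ depends only on the order type of $(m,n)$; combined with the data above, a short case analysis shows that $\phi$, or a simple Boolean combination of its instances such as $\phi(x,\bar y_1)\wedge\neg\phi(x,\bar y_2)$, has the order property, contradicting stability of $T$.

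\textbf{Part (b).} For each formula $\phi(x, \bar y)$ of the language of $T$, let $\mathcal F_\phi$ be the family of $\phi$-definable subgroups of $G$ containing $H$, and set $G_\phi := \bigcap \mathcal F_\phi$. By part (a) applied to $\mathcal F_\phi$, the intersection $G_\phi$ is already a finite subintersection, hence a definable subgroup of $G$ with $H \leq G_\phi$. Since there are at most $|T|$ choices of $\phi$, it suffices to prove $H = \bigcap_\phi G_\phi$; the inclusion $H \subseteq \bigcap_\phi G_\phi$ is immediate.

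The reverse inclusion is the main obstacle and is where stability is used beyond Baldwin--Saxl: given $g \in G \setminus H$, one must produce a single definable subgroup of $G$ containing $H$ but not $g$. The plan is a generic-types/stabilizer construction. Pick $\theta(x, \bar d) \in \pi(x)$ (the partial type defining $H$) with $\neg\theta(g, \bar d)$, and choose a complete type $p \supseteq \pi$ which is generic for $H$ in the sense that the left stabilizer $\mathrm{Stab}(p) := \{h \in G : hp = p\}$ has bounded index in $H$; such generics exist for type-definable subgroups in stable theories. The stabilizer $\mathrm{Stab}(p)$ is then a type-definable subgroup which is itself an intersection of definable subgroups, and approximating $p$ by its restrictions to the formulas relevant to $\theta$ yields a definable subgroup $K$ with $H \leq K$ and $g \notin K$. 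Executing this step uniformly in $\phi$ (and showing the resulting $K$ lies in some $\mathcal F_\phi$) is the delicate point; I would follow Poizat's treatment in \cite{PoStG} for the precise handling of generics of type-definable subgroups in the general (not necessarily superstable) stable setting.
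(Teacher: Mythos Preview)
The paper does not supply a proof of this fact at all: it is stated as a \emph{Fact} with citations to Poizat's book (\cite[Proposition 1.4 and Theorem 5.17]{PoStG}) and used as a black box. So there is no ``paper's own proof'' to compare against; you are effectively reconstructing the cited arguments.

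That said, your sketch is broadly on target but uneven. For part (a), the recursive construction you describe only controls $\phi(g_n,\bar b_{i_m})$ for $m\leq n+1$; the ``short case analysis'' after Ramsey must handle the tail $m>n+1$, and in one of the two cases you do not get the order property for $\phi$ itself but rather a diagonal pattern that still yields instability. This works, but the standard Baldwin--Saxl argument is cleaner: assuming a strictly descending finite subintersection of length $n+1$, choose for each $k\leq n$ an element $g_k\in\big(\bigcap_{j\neq k}H_{i_j}\big)\setminus H_{i_k}$, and observe that products $\prod_{k\in S}g_k$ lie in $H_{i_k}$ iff $k\notin S$, directly exhibiting $2^{n+1}$ $\phi$-types and hence instability for large $n$.

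For part (b), your reduction via $G_\phi$ and Baldwin--Saxl is correct, and the stabilizer idea is the right one, but the write-up is vague at exactly the crucial point. The clean statement you want is: for a global generic type $p$ of $H$, each \emph{local} stabilizer $\mathrm{Stab}_\phi(p)=\{g\in G: g\cdot p|_\phi=p|_\phi\}$ is a definable subgroup of $G$ containing $H$, and $\bigcap_\phi \mathrm{Stab}_\phi(p)\subseteq H$ because any $g$ in this intersection satisfies $g\cdot p=p$, hence $g\cdot a\in H$ for any realization $a$ of $p$ in $H$, forcing $g\in H$. Your formulation (``bounded index in $H$'', ``approximating $p$ by its restrictions to the formulas relevant to $\theta$'') obscures this and should be tightened if you intend to include a proof rather than, as the paper does, simply cite Poizat.
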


\begin{remark}
Unlike the previous preliminaries, these facts do not immediately go through through if $T$ is only assumed to be simple. Indeed, there are simple unstable groups where part $(a)$ fails \cite[Example 1]{WaSimG}. On the the other hand, whether part $(b)$ holds for groups definable in simple theories is a well known open question.
\end{remark}

For the rest of this section, when we say $G$ is a \emph{stable group}, we mean $G=G(\M)$ is a group definable in the monster model $\M$ of a stable theory $T=T^{\eq}$. Given a stable group $G$, we let $G^0$ denote the \emph{connected component of $G$}, which is the intersection of all definable subgroups of $G$ of finite index. By stability (e.g., Fact \ref{fact:SG}), $G^0$ is the intersection of at most $|T|$ many definable subgroups of $G$ of finite index, and hence  is type-definable (over the same parameters used to define $G$). We say $G$ is \emph{connected} if $G=G^0$.

For the sake of clarity, it is worth making a few remarks concerning weight and $U$-rank in stable groups. In particular, given a definable group $G$ and $A\subset\M$, we let $S_G(A)$ denote the space of complete types, over parameters in $A$, which contain a formula defining $G$. Then, if $\rk$ denotes either $U$-rank or weight, we can express $\rk(G)$ as
$$
\rk(G)=\sup\{\rk(p):p\in S_G(A)\text{ for some $A\subset\M$}\}.
$$
We say $G$ \emph{has finite $U$-rank} (respectively, \emph{finite weight}) if $U(G)<\omega$ (respectively, $\wt(G)<\omega$). 

If $G$ is stable then $U(p)=U(G)$ for any generic type $p$ in $G$ (see \cite[Lemma III.4.5$(i)$]{BeLa}). On the other hand, it is possible that all types in $G$ have finite weight, but $\wt(G)$ is not finite (e.g. Example \ref{ex:main}$(1)$ below). Since our focus is on the case that $\wt(G)$ is finite, we will not concern ourselves with this situation.

\begin{remark}\label{rem:ex}
When considering examples of stable groups, it is often the case that the group $G$ is the whole structure (i.e. defined by the formula $x=x$). Therefore, given a group $G=(G,\cdot,1,\ldots)$, when we speak of the $U$-rank or weight of $G$, we continue to mean as calculated in a monster model according the definitions and conventions above.
\end{remark}

The following examples illustrate some of the possible variety concerning weight, $U$-rank, and $<_\infty$-chains in stable groups. 

\begin{example}\label{ex:main}$~$
\begin{enumerate}
\item Let $G\models \Th(\Z,+,0,\{2^n:n\in\N\})$. Then $G$ is superstable of $U$-rank $\omega$ (see \cite{PaSk}, \cite{PoZ}). Thus every type in $G$ has finite weight (i.e. $G$ is strongly stable), but $G$ does not have finite weight by Theorem \ref{thm:main}.

\item Fix an integer $n>0$ and let $G\models\Th(\Q^n,+,0,(H_k)_{k<n})$ where, for each $k<n$, $H_k=\Q^k\times \{0\}^{n-k}$. We have a sequence $(E_k)_{k<n}$ of definable equivalence relations, given by $E_k(x,y)\leftrightarrow x-y\in H_k$. Given $a,b\in G$, let $d(a,b)=\min\{k<n: E_k(a,b)\}$. Then $d$ is an ultrametric on $G$, taking values in $\{0,1,\ldots,n\}$; and nonforking independence is characterized by: $A\ind_C B$ if and only if, for all $a\in \acl(AC)$, $d(a,\acl(BC))=d(a,\acl(C))$ (where algebraic closure is the same as in $\Th(\Q^n,+,0)$). Using this, one may verify that $G$ is superstable of $U$-rank $n$ and weight $1$. 

\item Let $G\models\Th(\Q^{\omega},+,0,(H_n)_{n<\omega})$ where, for each $n<\omega$, $H_n=\Q^n\times \{0\}^\omega$. Using a similar argument as in part (2), one may show that $G$ is superstable of $U$-rank $\omega$ and weight $1$.
\item Let $G\models\Th(\Q^{\omega},+,0,(K_n)_{n<\omega})$ where, for each $n<\omega$, $K_n=\{0\}^n\times \Q^\omega$. Then $G$ is strictly stable of weight $1$ (this is again similar to part $(2)$).
\end{enumerate}
\end{example}

Our final preliminary tools concern indecomposable sets in stable groups.

\begin{definition}
Let $G$ be a stable group. A type-definable set $X\subseteq G$ is \textbf{indecomposable} if, for all type-definable subgroups $H\leq G$, either $X/H$ is unbounded or $|X/H|=1$ (where $X/H=\{xH:x\in X\}$).
\end{definition}

\begin{proposition}\label{prop:indec}
Let $G$ be a stable group. Fix $A\subset \M$ and a stationary type $p\in S_G(A)$. Let $X=p(\M)$. Then $X\subseteq G$ is indecomposable.
\end{proposition}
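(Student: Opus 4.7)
The plan is to fix a type-definable subgroup $H\le G$ with $X/H$ bounded, and show $|X/H|=1$. Take $B$ a small parameter set defining $H$ (with $A\subseteq B$ after enlarging), and let $q=p|_B$ be the unique non-forking extension, which remains stationary. Writing $Y=q(\M)\subseteq X$, the set $Y/H\subseteq X/H$ is also bounded. The key first observation is that every coset $c\in Y/H$ lies in $\operatorname{bdd}(B)$: the $\Aut(\M/B)$-orbit of $c=aH$ consists of cosets $\sigma(a)H$ with $\sigma(a)\models q$, and so lies in the bounded set $Y/H$.

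The main step, showing $|Y/H|=1$, is a short stationarity computation. Assume for contradiction that $c_1\ne c_2\in Y/H$, and choose $a_i\models q$ with $a_iH=c_i$. Bounded hyperimaginaries cause no forking, so each $\tp(a_i/Bc_1c_2)$ is a non-forking extension of $q$; but $\tp(a_1/Bc_1c_2)$ and $\tp(a_2/Bc_1c_2)$ evidently differ on the partial type ``$xH=c_1$'', contradicting the uniqueness of the non-forking extension of $q$ to $Bc_1c_2$ guaranteed by stationarity. Hence $Y\subseteq cH$ for a single coset $c$.

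The main obstacle is upgrading $|Y/H|=1$ to $|X/H|=1$, since a general $a\models p$ may realize a forking extension of $p$ over $B$. My plan is to observe that the coset $c$ is in $\operatorname{dcl}^{\operatorname{eq}}(B)$ (as it is canonically determined by the stationary type $q$), and then, for any $a\models p$, take an independent companion $a'\models p|_{AaB}$, so that $a'\models q$ forces $a'H=c$. A parallel boundedness-and-stationarity argument applied to $\tp(a/\acl^{\eq}(B))$ --- which is automatically stationary over an algebraically closed set --- then forces $aH=c$ as well, completing the proof.
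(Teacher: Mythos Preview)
Your argument that $|Y/H|=1$ (where $Y=q(\M)$ for $q$ the nonforking extension of $p$ to $B$) is correct and is essentially the same stationarity idea used in the paper, phrased via bounded hyperimaginaries.

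The gap is in your upgrade from $|Y/H|=1$ to $|X/H|=1$. Applying the ``parallel boundedness-and-stationarity argument'' to $r=\tp(a/\acl^{\eq}(B))$ only shows that all realizations of $r$ lie in a \emph{single} coset $c_aH$; it does not show $c_a=c$. Your companion $a'$ does not bridge this: since $a'\models p|_{AaB}$, the type $\tp(a'/\acl^{\eq}(B))$ is the \emph{nonforking} extension $q'$ of $p$, whereas $r$ may be a \emph{forking} extension of $p$ when $a\nind_A B$. In that case $r\neq q'$, so $a'\not\models r$, and knowing $a'H=c$ tells you nothing about the coset $c_a$ determined by $r$. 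Concretely, the partial type ``$xH=c$'' is over $B$, not over $A$, and the implication $q\models cH\ \Rightarrow\ p\models cH$ is simply false for arbitrary $B$-type-definable sets (take $cH$ replaced by ``$x\neq b$'' for some $b\models p$).

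The paper circumvents this exactly by making everything $A$-invariant before running the stationarity argument: it replaces $H$ by $H_0=\bigcap\{H:X/H\text{ bounded}\}$, checks (via Fact~\ref{fact:SG}) that $H_0$ is type-definable with $X/H_0$ still bounded, and observes that $H_0$ is $A$-invariant because $X$ is. Then the unique coset $cH_0$ containing realizations of the global nonforking extension $\tilde{p}$ is itself $\Aut(\M/A)$-invariant (as both $\tilde{p}$ and $H_0$ are), hence type-definable over $A$, hence $p\models cH_0$ and $X\subseteq cH_0\subseteq cH$. If you want to salvage your approach, the cleanest fix is precisely this: reduce to the case where $H$ is type-definable over $A$, after which your step~6 becomes vacuous since then $B=A$ and $Y=X$.
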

\begin{proof}
Let $\mathcal{F}$ denote the family of type-definable subgroups $H\leq G$ such that $X/H$ is bounded. Let $H_0$ be the intersection of the elements of $\mathcal{F}$. Using Fact \ref{fact:SG}, it is a standard exercise to show that $H_0$ is a type-definable subgroup of $G$ and $X/H_0$ is bounded (i.e. $H_0\in\mathcal{F}$). Note that $A$-invariance of $X$ implies $A$-invariance of $H_0$, and so $H_0$ is type-definable over $A$.

Let $\tilde{p}\in S_G(\M)$ be the unique global nonforking extension of $p$. Let $C\subset X$ be a bounded set such that $X/H_0=\{cH_0:c\in C\}$, and fix a realization $u\in G$ of $\tilde{p}|_{AC}$. Then $u\in X$, and so $u\in cH_0$ for some $c\in C$, which means $\tilde{p}\models cH_0$. If $f\in\Aut(\M/A)$ then, by $A$-invariance of $H_0$ and $\tilde{p}$, we have $\tilde{p}\models f(c)H_0$, and so $f(cH_0)=f(c)H_0=cH_0$. Consequently, $cH_0$ is type-definable over $A$, and so $p\models cH_0$. Therefore $X\subseteq cH_0$, which implies $X\subseteq cH$ for all $H\in\mathcal{F}$.
\end{proof}

A well-known result of Berline and Lascar is the Indecomposability Theorem for superstable groups \cite[Theorem V.3.1]{BeLa}.  In order to use this result without the assumption of superstability, we state the following corollary of its proof.

\begin{fact}\label{thm:BeLa}
Suppose $G$ is a stable group and $\{X_i:i\in I\}$ is a family of indecomposable type-definable subsets of $G$, each containing $1$. Given  $n>0$ and $\sigma=(i_0,\ldots,i_n)\in I^{<\omega}$, let $X_\sigma=X_{i_0}\cdot X_{i_1}\cdot\ldots\cdot X_{i_n}$. Assume that there is a uniform finite bound on $U(X_\sigma)$, where $\sigma$ ranges over $I^{<\omega}$. Then $\bigcup_{i\in I}X_i$ generates a connected type-definable subgroup $H$ of $G$. In particular, there are $i_0,\ldots,i_n$ such that $H=X_{i_0}\cdot\ldots\cdot X_{i_n}\cdot X_{i_n}^{\text{-}1}\cdot\ldots\cdot X_{i_0}^{\text{-}1}$. 
\end{fact}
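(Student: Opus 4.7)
The plan is to adapt the Berline--Lascar proof \cite[V.3.1]{BeLa} from the superstable setting, with the uniform finite bound on $U(X_\sigma)$ playing the role that $U(G)<\omega$ plays there.

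I would start by setting $m := \max\{U(X_\sigma) : \sigma\in I^{<\omega}\}$, finite by hypothesis, and choosing $\sigma^* = (i_0,\ldots,i_n) \in I^{<\omega}$ achieving the maximum. Write $Y = X_{\sigma^*}$ and $H = Y\cdot Y^{-1} = X_{i_0}\cdots X_{i_n}\cdot X_{i_n}^{-1}\cdots X_{i_0}^{-1}$. A key initial observation: because each $X_j$ contains $1$, any product obtained by extending $\sigma^*$ still contains $Y$, so its $U$-rank is exactly $m$; and since $X_j^{-1}$ has the same $U$-rank as $X_j$, the same cap applies to products in which some factors are replaced by inverses. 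In particular $U(H)=m$.

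Next I would pick a stationary type $p \in S_G(A)$, over a small set $A$ of parameters defining all relevant sets, with $p \vdash Y$ and $U(p)=m$. By Proposition \ref{prop:indec}, $P := p(\M)$ is indecomposable. The central analytic object is the left stabilizer $S$ of $p$, a type-definable subgroup of $G$ over $A$. For each $X_j$, indecomposability of $X_j$ applied to $S$ yields a dichotomy: either $X_j$ lies in a single $S$-coset, which must be $S$ itself since $1\in X_j\cap S$, so $X_j\subseteq S$; or the family $\{xS:x\in X_j\}$ is unbounded. The second alternative contradicts the maximality of $m$: unbounded $X_j/S$ forces the product $Y\cdot X_j$ to meet unboundedly many cosets of $S$, each of which already contains a generic realization of $p$ coming from $Y$; a Lascar-style fiber estimate based on Fact \ref{fact:RW} then pushes $U(Y\cdot X_j)>m$, contradicting the choice of $m$. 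Hence $X_j\subseteq S$ for every $j\in I$.

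Once every $X_j$ lies in $S$, the product defining $H$ does too, so $H\subseteq S$. To close the proof I would argue $H=S$ by combining (a) the construction in Proposition \ref{prop:indec}, which realizes $S$ as the smallest type-definable subgroup such that $P$ lies in a single coset, and (b) the inclusion $P\cdot P^{-1}\subseteq S\cap H$, which together with the rank equality $U(S)=U(P\cdot P^{-1})=m$ and Fact \ref{fact:SG}(b) forces $S\subseteq H$. Connectedness of $H$ is then inherited from that of $S$, since $S$ is generated (in the type-definable sense) by the indecomposable set $P$.

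The main obstacle will be making the ``unbounded $X_j/S$ forces $U(Y\cdot X_j)>m$'' estimate fully rigorous in the merely stable (not superstable) ambient theory. In the classical Berline--Lascar setup the strict rank increase is traced through the ordinal $U$-rank of $G$ itself; here no such global rank is available, and the increase must be isolated entirely from the coset structure of $S$ inside a product whose total $U$-rank is capped at $m$, using only Fact \ref{fact:RW}, the indecomposability of $X_j$, and the internal structure of the stabilizer $S$. This is the delicate heart of the adaptation, and is precisely what the uniform rank-bound hypothesis on the $X_\sigma$ is designed to enable.
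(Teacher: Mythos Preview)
The paper does not prove this statement; it is recorded as a Fact, described as ``a corollary of [the] proof'' of \cite[Theorem~V.3.1]{BeLa}, the point being that the only role superstability plays in Berline--Lascar is to bound the $U$-rank of the finite products $X_\sigma$. Your plan to rerun that argument under the stated hypothesis is therefore exactly what the paper is pointing to, and your skeleton (maximize $U(X_\sigma)$, choose a stationary $p$ of rank $m$ on $Y=X_{\sigma^*}$, work with $S=\operatorname{Stab}(p)$) is the standard Berline--Lascar architecture.

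Two steps in the sketch are not right as written. First, ``since $X_j^{-1}$ has the same $U$-rank as $X_j$, the same cap applies to products in which some factors are replaced by inverses'' is invalid: $U(A)=U(A^{-1})$ says nothing about $U$ of a mixed product, and the hypothesis bounds only inverse-free words $X_\sigma$. The Berline--Lascar computation genuinely uses bounds on products such as $YY^{-1}Y$, so either the family must be closed under inversion (as it is in the paper's sole application, Lemma~\ref{lem:main}) or this must be arranged at the outset. Second, your route to $X_j\subseteq S$ via ``unbounded $X_j/S\Rightarrow U(Y\cdot X_j)>m$'' is not how the argument runs and is circular as ordered: a Lascar-type rank increase from unboundedly many $S$-cosets requires knowing $U(S)=m$, which you only establish afterwards in step~(b). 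In Berline--Lascar one first shows, by a direct computation with independent $a,b\models p$ inside $YY^{-1}Y$, that $ab^{-1}$ is generic in $S$, giving $U(S)=m$ and $S=P\cdot P^{-1}\subseteq YY^{-1}$; only then does indecomposability of $X_j$ against $S$ (plus Lascar's inequality for the coset map $X_jS\to X_jS/S$) force $X_j\subseteq S$. So the ``delicate heart'' you flag is handled by reordering---prove the rank of $S$ before attacking $X_j$---rather than by the bare coset-counting estimate you describe.
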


\section{Proofs of the main results}\label{sec:proofs}

As in the previous section, when we say $G$ is a \emph{stable group} we mean $G=G(\M)$ is a group definable in the monster model $\M$ of a stable theory $T=T^{\eq}$. Toward the proofs of Theorems \ref{thm:mainZ}, \ref{thm:main}, and \ref{thm:all}, we start with the following technical lemma concerning definable subgroups of infinite stable groups of finite weight.

\begin{lemma}\label{lem:main}
Let $G$ be an infinite stable group of finite weight.
\begin{enumerate}[$(a)$]
\item  There is an infinite connected type-definable normal subgroup $H\leq G$, with $U(H)=\wt(H)$. 
\item Assume $G$ has no infinite descending $<_\infty$-chains of definable normal subgroups. If $K<_\infty G$ is definable and normal in $G$, then there is a definable normal subgroup $L\leq G$ such that $K<_\infty L$ and $U(G/K)\leq \wt(G/K)\oplus U(G/L)$. 
\end{enumerate}
\end{lemma}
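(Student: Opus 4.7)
My plan is to prove (a) by a new use of the Zilber indecomposability machinery, controlling $U$-rank via Proposition \ref{prop:RW1}, and then to deduce (b) by applying (a) to $G/K$ and combining with the descending chain hypothesis.

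For (a), the first step is to produce a stationary non-algebraic type $p \in S_G(C)$ with $U(p) = 1$. Since $G$ is infinite it has non-algebraic types, and a forking descent argument, using finite weight to force termination (an infinite descent by non-algebraic forking extensions would eventually contradict $\wt(G) < \omega$), yields a rank-$1$ type. Let $X = p(\M)$; by Proposition \ref{prop:indec}, $X$ is indecomposable. Replacing $X$ by the right translate $Y = X x_0^{-1}$ for some $x_0 \in X$ places $1$ into the set, and right translation preserves both indecomposability and the property that each element has $U$-rank $\leq 1$ over appropriate parameters. Consider the family $\{Y^g : g \in \M\}$ of all conjugates. Every element of a finite product $Y^{g_0} \cdots Y^{g_n}$ lies in $\acl$ of finitely many rank-$\leq 1$ elements (over parameters containing $C,x_0,g_0,\ldots,g_n$), so Proposition \ref{prop:RW1} gives a uniform bound $U(Y^{g_0} \cdots Y^{g_n}) \leq \wt(G)$. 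Fact \ref{thm:BeLa} now produces a connected type-definable subgroup $H$ equal to a finite product of conjugates. It is normal (being generated by all conjugates of $Y$), infinite (contains $Y$), and a further appeal to Proposition \ref{prop:RW1} gives $U(H) = \wt(H)$.

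For (b), apply (a) to $G/K$, which is stable, of finite weight $\leq \wt(G)$, and infinite since $K <_\infty G$. The resulting $H \leq G/K$ is type-definable, connected, normal, infinite, with $U(H) = \wt(H) \leq \wt(G/K)$. By Fact \ref{fact:SG}(b) combined with an averaging argument over the conjugation action (using Fact \ref{fact:SG}(a) for uniform definability), write $H = \bigcap_i M_i$ with each $M_i$ definable and normal in $G/K$. Using the descending chain hypothesis in $G/K$, choose a definable normal $M \supseteq H$ that is $<_\infty$-minimal over $H$: every definable normal $M' \leq M$ with $M' \supseteq H$ satisfies $[M:M'] < \infty$. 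Then for each $i$, $M \cap M_i$ is definable normal with $H \leq M \cap M_i \leq M$, so by minimality $[M : M \cap M_i] < \infty$, whence the connected component $M^0$ satisfies $M^0 \leq M \cap M_i$. Intersecting over $i$ gives $M^0 \leq \bigcap_i M_i = H$. Combined with $H \leq M^0$ (the standard fact that a connected type-definable subgroup lies inside the connected component of any containing definable group), we obtain $M^0 = H$, hence $U(M) = U(M^0) = U(H) \leq \wt(G/K)$. Letting $L \leq G$ be the preimage of $M$, we have $L$ definable normal, $K <_\infty L$ (since $M$ is infinite), and Lascar's inequality for stable groups gives
\[
U(G/K) \leq U(L/K) \oplus U(G/L) \leq \wt(G/K) \oplus U(G/L).
\]

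The key difficulty I foresee is the first step of (a): producing a rank-$1$ type from only the finite weight hypothesis. Because $U$-rank need not be defined on all types of a stable theory (cf.\ Example \ref{ex:main}), one cannot simply descend by $U$-rank; the descent must be carried out via non-algebraic forking extensions, and the termination of this process must be extracted from finite weight by a careful argument showing that an infinite descent witnesses infinite weight. Once this rank-$1$ type is in hand, the rest of (a) and all of (b) follow from the tools assembled in the preliminaries.
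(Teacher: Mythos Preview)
Your overall strategy matches the paper's: obtain a stationary rank-$1$ type, translate to place $1$ in its realization set, close under conjugation, bound $U$-rank of finite products via Proposition~\ref{prop:RW1}, apply Fact~\ref{thm:BeLa}, and then for (b) pass to $G/K$ and descend from the type-definable $H$ to a definable normal overgroup using the chain hypothesis. Your treatment of (b) is essentially identical to the paper's.

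The gap is exactly where you flagged it, but your proposed fix does not work. You claim that an infinite chain of non-algebraic forking extensions would witness infinite weight; Example~\ref{ex:main}(4) refutes this. The group $(\Q^{\omega},+,0,(K_n)_{n<\omega})$ has weight $1$, yet its generic type admits an infinite chain of non-algebraic forking extensions (fork successively into cosets of $K_1, K_2, \ldots$). A forking chain expresses dependence of a single realization over a \emph{nested} tower of parameter sets, not over an independent family, so it does not produce a weight configuration; there is no ``careful argument'' extracting one.

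The paper's route is simply to take $p$ minimal in the fundamental order among non-algebraic types. Such a type exists in any stable theory with infinite models, with no finite-weight hypothesis needed: each strict forking extension enlarges the set of represented formulas, so a transfinite forking descent (taking unions at limits, which remain non-algebraic by compactness) must terminate in fewer than $|T|^+$ successor steps. Minimality then says every forking extension is algebraic, i.e.\ $U(p)=1$. Once you replace your termination argument with this, the rest of your proof goes through.
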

\begin{proof}
Part $(a)$. Fix a stationary type $p\in S_G(A)$, for some $A\subset\M$, such that $U(p)=1$. For example, choose $p$ minimal in the fundamental order among non-algebraic types in $S_G(A)$ (with $A$ varying over small parameter sets in $\M$), and then replace $p$ by a nonforking extension to a model. 

Let $Y=p(\M)$. Then $Y\subseteq G$ is indecomposable by Proposition \ref{prop:indec}. Fix some $u\in Y$, and set $X=u^{\text{-}1} Y$. Given $g\in G$, let $X_0^g=gXg^{\text{-}1}$ and $X_1^g=g^{\text{-}1}X^{\text{-}1}g=(X_0^g)^{\text{-}1}$. Then $\{X_i^g:g\in G,~i\in\{0,1\}\}$ is a family of indecomposable type-definable subsets of $G$, each of which contains $1$. By Fact \ref{fact:RW}$(c)$, $U(X_i^g)=1$ for all $g\in G$ and $i\in\{0,1\}$. Fix a sequence $\sigma=(g_0,\ldots,g_n)$ of elements of $G$, and set $X_\sigma=X_0^{g_0}\cdot\ldots\cdot X_0^{g_n}\cdot X_1^{g_n}\cdot\ldots\cdot X_1^{g_0}$. In particular, $X_\sigma\subseteq \acl(\bigcup_{t=0}^n X^{g_t}_0\cup X^{g_t}_1)$ and so, by Proposition \ref{prop:RW1}, $U(q)=\wt(q)$ for any $q\models X_\sigma$. Therefore $U(X_\sigma)=\wt(X_\sigma)$, and so $U(X_\sigma)\leq\wt(G)$ by Fact \ref{fact:RW}$(c)$.

Now we may apply Fact \ref{thm:BeLa} to conclude that $\bigcup\{X_i^g:g\in G,~i\in\{0,1\}\}$ generates an infinite connected type-definable subgroup $H$ of $G$, which is normal by construction. Moreover, $H=X_\sigma$ for some $\sigma\in G^{<\omega}$, and so $U(H)=\wt(H)$.

Part $(b)$. Let $K<_\infty G$ be definable and normal. We use $\rho$ to denote the pullback function on subgroups of $G/K$, i.e., given $H\leq G/K$ define $\rho(H)=\{g\in G:gK\in H\}\leq G$.

By assumption and Fact \ref{fact:RW}$(c)$, $G/K$ is an infinite stable group of finite weight.  By part $(a)$ applied to $G/K$, there is an infinite connected type-definable normal subgroup $H\leq G/K$, with $U(H)=\wt(H)$. Since $H$ is type-definable, it is the intersection of a bounded family $(H_i)_{i\in I}$ of definable subgroups by Fact \ref{fact:SG}$(b)$. Since $H$ is normal we may use Fact \ref{fact:SG}$(a)$ to replace each $H_i$ with $\bigcap_{g\in G/K}gH_ig^{\text{-}1}$, and thus assume $H$ is the intersection of a bounded family of definable normal subgroups of $G/K$. Now, $G/K$ has no infinite descending $<_\infty$-chains of definable normal subgroups since such a chain would pull back via $\rho$ to a chain in $G$. It follows that there is a definable normal subgroup $J$ of $G/K$ such that $H\leq J$ and $[J:H]$ is bounded. Since $H$ is type-definable and connected we then have $H=J^0$, which implies $U(J)=U(H)=\wt(H)$ (see, e.g., \cite[Sections III.4, IV.3]{BeLa}). By Fact \ref{fact:RW}$(c)$, $U(J)\leq\wt(G/K)$.

Now let $L=\rho(J)$. Then $L$ is a definable normal subgroup of $G$ and, since $J$ is infinite, $K<_\infty L$. By definition of $L$, the groups $G/L$ and $(G/K)/J$ are definably isomorphic and so, by Lascar's inequality for cosets \cite[Corollary III.8.2]{BeLa},
\[
U(G/K)\leq U(J)\oplus U(G/L)\leq \wt(G/K)\oplus U(G/L).\qedhere
\]
\end{proof}

We now prove the main results stated in the introduction.

\begin{proof}[Proof of Theorem \ref{thm:all}]
Let $G$ be a stable group of finite weight, with no infinite $<_\infty$-chains of definable normal subgroups. We will use Lemma \ref{lem:main}$(b)$ to construct an ascending $<_\infty$-chain of definable normal subgroups of $G$. By assumption, this construction must terminate at some finite step, at which point we will make the desired conclusions (claims $(i)$ and $(ii)$ in the statement of the theorem).

To start the construction, let $K_0=\{1\}$. Now fix $m<\omega$ and suppose we have constructed definable normal subgroups $K_0<_\infty\ldots<_\infty K_m\leq G$ such that $U(G)\leq U(G/K_m)\oplus\sum_{i<m}\wt(G/K_i)$. If $G/K_m$ is finite then we terminate the construction. Otherwise, if $K_m<_\infty G$ then we use Lemma \ref{lem:main}$(b)$ to find a definable normal subgroup $K_{m+1}\leq G$ such that $K_m<_\infty K_{m+1}$ and $U(G/K_m)\leq U(G/K_{m+1})\oplus \wt(G/K_m)$. By induction, $U(G)\leq U(G/K_{m+1})\oplus\sum_{i\leq m}\wt(G/K_i)$. 

Since $G$ has no infinite ascending $<_\infty$-chains of normal subgroups, the above construction must terminate at some $\hat{m}<\omega$, meaning that $G/K_{\hat{m}}$ is finite. By construction and Fact \ref{fact:RW}, $U(G)\leq \sum_{i<\hat{m}}\wt(G/K_i)\leq \hat{m}\wt(G)$. Thus $U(G)$ is finite which, by Lascar's inequality for cosets, immediately yields claim $(i)$. For claim $(ii)$, let $n$ be the maximal length of a $<_\infty$-chain of definable normal subgroups of $G$ (note that $n$ exists by $(i)$). We must have $\hat{m}\leq n$ and so $U(G)\leq n\wt(G)$.  
\end{proof}

\begin{remark}
Note that in the proof of Theorem \ref{thm:all}, the assumption of no infinite descending $<_\infty$-chains of definable normal subgroups is used when applying Lemma \ref{lem:main}$(b)$. The stable groups described in parts $(3)$ and $(4)$ of Example \ref{ex:main} illustrate that all assumptions in Theorem \ref{thm:all} are necessary.
\end{remark}

As outlined in the introduction, Theorem \ref{thm:main} follows immediately from Theorem \ref{thm:all} and standard facts.

\begin{proof}[Proof of Theorem \ref{thm:main}]
$(ii)\Rightarrow (iii)$ is trivial, and $(iii)\Rightarrow (i)$ is by Theorem \ref{thm:all}. For $(i)\Rightarrow(ii)$, first recall that finite $U$-rank implies finite weight by Lemma \ref{lem:RW}. Moreover, if $G$ is superstable of finite $U$-rank then it follows from Lascar's inequality for cosets that $G$ has no infinite $<_\infty$-chains of definable subgroups. 
\end{proof}

Finally, we apply Theorem \ref{thm:main} to prove our main result concerning $(\Z,+,0)$, namely that there are no proper stable expansions of $(\Z,+,0)$ of finite dp-rank.

\begin{proof}[Proof of Theorem \ref{thm:mainZ}]
Suppose $P\subseteq\Z^n$ is definable in a finite dp-rank expansion of $(\Z,+,0)$ and $(\Z,+,0,P)$ is stable. We want to show $P$ is definable in $(\Z,+,0)$. We work in $T=\Th(\Z,+,0,P)$, and let $G$ be a sufficiently saturated model of $T$.  Since dp-rank cannot increase after taking a reduct, $G$ has finite dp-rank, and thus  finite weight by Fact \ref{fact:dpwt}. We claim that $G$ has no nontrivial definable subgroup of infinite index. Indeed, otherwise in $\Z$ we obtain a family $(H_n)_{n<\omega}$ of uniformly definable nontrivial subgroups of $\Z$ such that $H_n$ has index at least $n$. In particular, the intersection $H=\bigcap_{n\in\omega}H_n$ has infinite index in $\Z$, and thus $H=\{0\}$. But by Fact \ref{fact:SG}$(a)$, $H$ is equal to a finite subintersection, which is a contradiction since the intersection of finitely many nontrivial subgroups of $\Z$ is infinite. Now we may apply Theorem \ref{thm:main} to $G$ and conclude $U(G)$ is finite. Then $T$ is superstable of finite $U$-rank and so $P$ is definable in $(\Z,+,0)$ by Fact \ref{fact:PaSk}.
\end{proof}

\subsection*{Acknowledgements} 
We are grateful to Rizos Sklinos and Erik Walsberg for their comments on an earlier draft. We also thank the referee for several helpful comments and suggestions, which greatly improved the final version.

\bibliographystyle{amsplain}
\end{document}